\numberwithin{equation}{section}
\numberwithin{figure}{section}
\theoremstyle{plain}
\newtheorem{thm}{\protect\theoremname}
\theoremstyle{plain}
\newtheorem{prop}[thm]{\protect\propositionname}
\newtheorem{nota}{\protect\notationname}
\theoremstyle{definition}
\newtheorem{ex}[thm]{\protect\exname}
\providecommand{\propositionname}{Proposition}
\providecommand{\theoremname}{Theorem}
\providecommand{\corollaryname}{Corollary}
\providecommand{\notationname}{Notation}
\providecommand{\exname}{Example}
\keywords{$\mathbb{A}^1$-contractible affine schemes, Cancellation Problem,
Makar-Limanov invariant, Derksen invariant.}
\subjclass[2020]{Primary: 14R10, 14F42; Secondary: 14F45, 14R20, 13N15}
\begin{document}
\title[$\mathbb{A}^1$-contractible affine varieties of higher dimension]{Algebraic families of higher dimensional  $\mathbb{A}^{1}$-contractible affine varieties non-isomorphic to affine spaces} 
\author{Adrien Dubouloz}
\address{Laboratoire de Math\'ematique et Applications, UMR 7348 CNRS, Universit\'e de Poitiers, 86000 Poitiers, France. \vspace{-1em}}
\address{Institut de Math\'ematiques de Bourgogne, UMR 5584 CNRS, Universit\'e de Bourgogne, 21000 Dijon, France.}
\email{adrien.dubouloz@math.cnrs.fr}
\author{Parnashree Ghosh}
\address{School of Mathematics, Tata Institute of Fundamental Research, Mumbai 400005, India.}
\email{pmaths@math.tifr.res.in, ghoshparnashree@gmail.com}

\begin{abstract}
We construct algebraic families of smooth affine $\mathbb{A}^1$-contractible varieties of every dimension $n\geq 4$ over fields of characteristic zero which are non-isomorphic to affine spaces and potential counterexamples to the Zariski Cancellation Problem. We further prove that these families of varieties are also counter examples to the generalized Cancellation problem. 
\end{abstract}
\maketitle

\section*{Introduction}
The characterization of affine spaces $\mathbb{A}^{n}$ among algebraic varieties remains a central and still largely open problem in affine algebraic geometry, a source of numerous developments and contributions over several decades, involving complementary techniques from commutative algebra, geometry, and topology. The $\mathbb{A}^{1}$-homotopy theory of schemes has shed new light on this problem and raised new questions, in particular strengthening the natural analogy with the characterization of Euclidean affine spaces $\mathbb{R}^n$ in geometric topology \cite{AsokOstvaer22}. It is a classical result that for $n\leq 2$, Euclidean spaces $\mathbb{R}^n$ are the only contractible open topological manifolds up to homeomorphism. On the other hand, in all dimension $n\geq 3$, there exist a continuum of pairwise non-homeomorphic open contractible manifolds of dimension n. Moreover, all such manifolds have the property that their cartesian product with $\mathbb{R}$ become homeomorphic to $\mathbb{R}^{n+1}$. 

\smallskip

In the algebraic setting, the natural analogous problem is to understand the geometry of smooth algebraic varieties X over a field which are $\mathbb{A}^{1}$-contractible, that is, for which the structure morphism $X\to\mathrm{Spec}(k)$ is an $\mathbb{A}^{1}$-weak equivalence in the $\mathbb{A}^{1}$-homotopy homotopy category $\mathbf{H}(k)$ of $k$-schemes of Morel and Voevodsky \cite{MV}. The question is intimately linked to the Zariski Cancellation Problem \cite{GuCan,RuCan} in particular for the fact that a smooth $k$-variety $X$ of dimension n such that $X\times_{k}\mathbb{A}_{k}^{1}$ is isomorphic to $\mathbb{A}_{k}^{n+1}$ is necessarily $\mathbb{A}^{1}$-contractible.
Asok and Doran \cite[Theorem 5.1]{AD} discovered a first infinitely countable collections of strictly quasi-affine $\mathbb{A}^{1}$-contractible varieties of any dimension $n\geq 4$. Over fields of positive characteristic, infinite families of examples of $\mathbb{A}^{1}$-contractible smooth affine varieties of every dimension $n\geq 3$ non-isomorphic to affine spaces, called generalized Asanuma varieties, were found as by-products of the negative solution to the Zariski Cancellation Problem in positive characteristic due Gupta \cite{G,G2,GG}.
Later on, examples of smooth affine $\mathbb{A}^{1}$-contractible varieties $X$ of dimension 3 over fields of characteristic zero non-isomorphic to $\mathbb{A}_{k}^{3}$ and admitting a positive dimensional moduli were constructed by Fasel and the first author \cite{DF}. Affine cylinders $X\times_{k}\mathbb{A}_{k}^{m}$, $m\geq 1$ over these varieties provide natural collections of $\mathbb{A}^{1}$-contractible affine varieties of any dimension $n\geq 4$, but it is not known so far whether any of these varieties is isomorphic to an affine space or not, in particular they all remain potential counterexamples to the Zariski Cancellation Problem in characteristic zero. 

On the other hand, it is known by classification that the affine line $\mathbb{A}_{k}^{1}$ is the only $\mathbb{A}^{1}$-contractible smooth curve over any base field and very recent results of Choudhury and Roy \cite{CR} confirmed that over a field of characteristic zero, the affine plane $\mathbb{A}_{k}^{2}$ is the only $\mathbb{A}^{1}$-contractible smooth affine surface up to isomorphism. Thus, as far as the existence problem for "exotic" smooth affine $\mathbb{A}^{1}$-contractible varieties is concerned, the remaining  questions are whether there exist smooth affine $\mathbb{A}^{1}$-contractible surfaces non-isomorphic to $\mathbb{A}_{k}^{2}$ over some fields of positive characteristic and whether there exist smooth affine $\mathbb{A}^{1}$-contractible varieties of dimension $n\geq 4$ non-isomorphic to $\mathbb{A}_{k}^{n}$ over some fields of characteristic zero.

\smallskip

 In this article, we settle the second question by constructing for every field $k$ of characteristic zero and every $n\geq 4$ algebraic families of pairwise non-isomorphic $\mathbb{A}^1$-contractible affine $k$-varieties of dimension $n$. These arise as natural higher dimensional generalizations of the deformed Koras-Russell threefolds considered in \cite{DF,DMP} and besides their $\mathbb{A}^1$-contractibility, they share with them many other algebro-geometric properties in relation to the Zariski Cancellation Problem and its generalizations. The following theorem offers a concise illustration of the main results of the article in a specific case.
 
\medskip

\noindent {\bf Theorem.}
Let $k$ be a field of characteristic zero,  $n\geq 4$ be an integer and let $\mathcal{X}$ be the affine variety in $\mathbb{A}^{n-3}_k\times_k \mathbb{A}^{m+4}_k=\mathrm{Spec}(k[a_2,\ldots, a_{n-2}][x_0,\ldots, x_{m},y,z,t])$ with equation 
$$
\underline{x}^ny+z^2+t^3+x_0(1+\underline{x} +\sum_{i=2}^{n-2} a_i \underline{x}^i)=0,
$$ where $\underline{x}=\prod_{i=0}^m x_i$. Then, viewing $\mathcal{X}$ as a scheme   $f=\mathrm{pr}_1:\mathcal{X}\to \mathbb{A}^{n-3}_k$, the following hold:
\begin{itemize}
    \item [1.] The fibers of $f:\mathcal{X}\to \mathbb{A}^{n-3}_k$ over points of $S$ (closed or not) are $\mathbb{A}^1$-contractible smooth affine varieties of dimension $m+3$ non-isomorphic to $\mathbb{A}^{m+3}$ over the corresponding residue fields. 

    \item[2.]  The fibers $\mathcal{X}_s$ of $f:\mathcal{X}\to \mathbb{A}^{n-3}_k$  over $k$-rational points $s\in \mathbb{A}^{n-3}_k(k)$ are pairwise non-isomorphic $k$-varieties whose $\mathbb{A}^1$-cylinders $\mathcal{X}_s\times_k\mathbb{A}^1_k$ are all isomorphic.
\end{itemize}

The second assertion of the theorem states in particular that the $k$-varieties $\mathcal{X}_s$ add to the  list of counterexamples to the generalized Cancellation Problem. Moreover, we show that the invariants known to date do not allow to distinguish the $k$-varieties $\mathcal{X}_s\times_k \mathbb{A}^1_k$ from an affine space, making all the varieties $\mathcal{X}_s$ potential counterexamples to the Zariski Cancellation Problem itself.

\medskip

The scheme of the article is the following: section one is devoted to 
establishing
basic properties of a class of affine varieties containing those of Theorem A. The techniques employed 
involve mainly algebraic invariants of actions of the additive group $\mathbb{G}_{a,k}$ on affine $k$-varieties  defined through their associated locally nilpotent $k$-derivations, 
see \cite{FreuBook}  for an account on this theory. 
The $\mathbb{A}^1$-contractibility of varieties in this class is established in section two
by geometric techniques involving an induction on their dimension. 

\medskip

\textit{Acknowledgements.} The present research was done in part during the stay of the authors at the CIRM in November 2024 at the occasion of the international conference "Motivic Homotopy in Interaction". The authors are grateful to Anand Sawant for fruitful discussions.  
The first author is supported in part by the ANR Grant "HQ-Diag" ANR-21-CE40-0015.

\section{Isomorphism types and $\mathbb{A}^1$-cylinders}

In what follows, $k$ denotes a field of characteristic zero. For any ring $R$ and positive integer $n$, $R^{[n]}$ denotes a polynomial ring in $n$-variables over $R$.

\begin{nota}\label{mainnota} We fix integers $q,r\geq2$ with $\gcd(q,r)=1$ and for every triple $(m,\underline{n}, p)$ where $m\geq 0$ is an integer, $\underline{n}=(n_0,\ldots, n_m)$ is an $(m+1)$-tuple of integers $n_i>1$ and $p \in k[u]$ is a polynomial with $p(0) \neq 0$, we put 
$$R_m(\underline{n},p)=k[x_0,\ldots,x_m,y,z,t]/\left(\underline{x}^{\underline{n}}y +z^q+t^r+x_0p(\underline{x})\right),$$
where $\underline{x}^{\underline{n}}=\prod_{i=0}^{m}x_{i}^{n_{i}}$ and $\underline{x}=\prod_{i=0}^m x_i$. We let $X_m(\underline{n},p)=\mathrm{Spec}\, (R_m(\underline{n},p))$. 
\end{nota}

Since $p(0)\neq 0$, it follows readily from the Jacobian criterion that every scheme $X_m(\underline{n},p)$ is a non-singular $k$-variety, of dimension $m+3$. 

\subsection{Isomorphism types}
We first review of basic properties concerning the isomorphism types of the $k$-algebras $R_m(\underline{n},p)$ and their associated $k$-varieties $X_m(\underline{n},p)$. We begin with the following proposition which asserts in particular that none of the $k$-varieties $X_m(\underline{n},p)$ is isomorphic the affine space $\mathbb{A}_k^{m+3}$.

\begin{prop}\thlabel{b} For every triple $(m,\underline{n},p)$, the Makar-Limanov and Derksen invariants of the $k$-algebra $R_m(\underline{n},p)$ are respectively equal to the subalgebras
$$\mathrm{ML}(R_m(\underline{n},p))=k[x_0,\ldots,x_m] \quad \textrm{and} \quad  \mathrm{DK}(R_m(\underline{n},p))=k[x_0,\ldots,x_m,z,t].$$ 
In particular, none of the $k$-algebras $R_m(\underline{n},p)$ is isomorphic to $k^{[m+3]}$.
\end{prop}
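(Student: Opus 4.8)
The plan is to compute the Makar-Limanov and Derksen invariants by analyzing all locally nilpotent derivations (LNDs) on $R = R_m(\underline{n},p)$. Recall $\mathrm{ML}(R)$ is the intersection of the kernels of all LNDs, and $\mathrm{DK}(R)$ is the subalgebra generated by the union of those kernels. I would write $B = k[x_0,\ldots,x_m,y,z,t]$ and let $\partial$ be a nonzero LND on $R$; the goal is to show every such $\partial$ kills each $x_i$, which gives $\mathrm{ML}(R) \subseteq k[x_0,\ldots,x_m]$, and conversely to exhibit enough LNDs whose kernels contain $z,t$ (and of course the $x_i$) so that the reverse inclusions hold and $\mathrm{DK}(R) \supseteq k[x_0,\ldots,x_m,z,t]$.

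The first concrete step is to produce explicit LNDs to get the easy inclusions. Consider derivations built from the relation $\underline{x}^{\underline{n}}y + z^q + t^r + x_0 p(\underline{x}) = 0$. Treating $\underline{x}, z, t$ as fixed, the variable $y$ is determined, so there should be a triangular LND that differentiates in the "$y$-direction": explicitly something proportional to the Jacobian-type derivation that annihilates $x_0,\ldots,x_m,z,t$ and acts nontrivially, witnessing that $k[x_0,\ldots,x_m,z,t]$ lies in $\ker$ of some LND. This immediately yields $\mathrm{DK}(R) \supseteq k[x_0,\ldots,x_m,z,t]$ and $\mathrm{ML}(R) \subseteq k[x_0,\ldots,x_m,z,t]$.

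The substantive part, and the expected main obstacle, is proving the two hard inclusions: that \emph{every} LND $\partial$ satisfies $\partial x_i = 0$ for all $i$ (giving $\mathrm{ML}(R) \supseteq k[x_0,\ldots,x_m]$ after checking no further elements survive), and that no LND has $z$ or $t$ outside its kernel in a way that enlarges $\mathrm{DK}(R)$ beyond $k[x_0,\ldots,x_m,z,t]$. The standard technique here is degree/filtration analysis: I would introduce a suitable weighted grading (using $\gcd(q,r)=1$ and the weights $q,r$ on $z,t$, balanced against the monomial $\underline{x}^{\underline{n}}y$) and pass to the associated graded ring, where the highest-degree part of $\partial$ induces an LND on the associated graded algebra. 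Because $q,r \geq 2$ are coprime, the leading form of the defining relation degenerates to something like $z^q + t^r$ plus a dominant monomial term, and one exploits that $z^q+t^r$ is (up to the extra variables) a cusp-type singularity whose ring has no nonzero LND fixing the $x_i$-locus except in controlled directions. Concretely, one shows any LND must annihilate $\underline{x} = \prod x_i$, forcing it to annihilate each factor $x_i$ since the $x_i$ are, up to units, the irreducible components of $\underline{x}$ in the factorial-type structure; then a Jacobian/degree argument pins down that $z$ and $t$ cannot be moved independently, ruling out any LND that would push $z$ or $t$ into the Derksen kernel's complement.

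I anticipate the delicate point is controlling the associated graded ring precisely enough — making sure the leading term of the relation is irreducible (or has understood factorization) so that the inherited LND on the graded ring is forced into the kernel pattern we want, and then lifting conclusions about kernels back from the graded ring to $R$ itself. This kind of argument is standard for Koras--Russell-type threefolds, and since the varieties $X_m(\underline{n},p)$ are explicitly designed as higher-dimensional analogues, I would expect the proof to follow the established template of Makar-Limanov, Kaliman--Makar-Limanov, and the treatments in \cite{DF,DMP}, with the extra variables $x_1,\ldots,x_m$ handled by an induction or by treating $\underline{x}^{\underline{n}}$ as a single monomial throughout. The final sentence, that no $R_m(\underline{n},p)$ is isomorphic to $k^{[m+3]}$, is then immediate: $\mathrm{ML}(k^{[m+3]}) = k$ since a polynomial ring admits the partial-derivative LNDs whose kernels intersect in the constants, whereas $\mathrm{ML}(R) = k[x_0,\ldots,x_m] \neq k$, and the Makar-Limanov invariant is an isomorphism invariant.
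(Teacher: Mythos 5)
Your overall strategy (explicit triangular derivations for the easy inclusions, the Makar-Limanov filtration technique for the hard ones) is the right one in principle, but there are two problems. First, a concrete error in the ``easy'' direction: there is no nonzero locally nilpotent derivation of $R=R_m(\underline{n},p)$ annihilating all of $x_0,\ldots,x_m,z,t$. Applying such a derivation $\partial$ to the defining relation gives $\underline{x}^{\underline{n}}\partial(y)=0$, and since $R$ is a domain this forces $\partial(y)=0$, i.e.\ $\partial=0$. The inclusion $k[x_0,\ldots,x_m,z,t]\subseteq\mathrm{DK}(R)$ instead requires \emph{two} locally nilpotent derivations, e.g.\ the triangular derivations $\delta_1=\underline{x}^{\underline{n}}\partial_z-qz^{q-1}\partial_y$ and $\delta_2=\underline{x}^{\underline{n}}\partial_t-rt^{r-1}\partial_y$, whose kernels contain $k[x_0,\ldots,x_m,t]$ and $k[x_0,\ldots,x_m,z]$ respectively and which together generate $k[x_0,\ldots,x_m,z,t]$ inside $\mathrm{DK}(R)$ (and give $\mathrm{ML}(R)\subseteq \ker\delta_1\cap\ker\delta_2$).

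Second, and more seriously, the substantive inclusions --- that every nonzero locally nilpotent derivation of $R$ annihilates each $x_i$ and has kernel contained in $k[x_0,\ldots,x_m,z,t]$ --- are only described, not proved: phrases such as ``one shows any LND must annihilate $\underline{x}$'' and ``a Jacobian/degree argument pins down'' stand in for precisely the computation that constitutes the proof. To close this you would need to specify the weight function, identify the associated graded ring and the factorization of the leading form of the relation, and run the Kaliman--Makar-Limanov argument in the presence of the extra variables $x_1,\ldots,x_m$; none of that is routine enough to leave as a gesture. For comparison, the paper does not redo this analysis at all: it observes that $k[z,t]/(z^q+t^r)$ is not normal and then quotes general results of Gupta and of Ghosh--Gupta (\cite[Proposition 3.4(i)]{G2}, \cite[Propositions 2.10 and 3.4]{GG}) which compute $\mathrm{ML}$ and $\mathrm{DK}$ for exactly this class of rings of the form $\underline{x}^{\underline{n}}y=f(\underline{x},z,t)$. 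Your final step, deducing $R\not\cong k^{[m+3]}$ from $\mathrm{ML}(R)=k[x_0,\ldots,x_m]\neq k=\mathrm{ML}(k^{[m+3]})$, is correct and is exactly what the paper does.
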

\begin{proof}
Since $k[z,t]/(z^q+t^r)$ is not normal, it follows from \cite[Proposition 3.4(i)]{G2} and \cite[Proposition 2.10]{GG}, that $\mathrm{DK}(R_m(\underline{n},p))=k[x_0,\ldots,x_m,z,t]$ and from \cite[Proposition 3.4]{GG} that $\mathrm{ML}(R_m(\underline{n},p))=k[x_0,\ldots,x_m]$. Since $\mathrm{ML}(R_m(\underline{n},p))\neq k$,  $R_m(\underline{n},p)$ is not isomorphic to a polynomial ring over $k$.
\end{proof}

The following proposition provides in turn a partial classification of the isomorphism classes of varieties of the form $X_m(\underline{n},p)$ for a fixed $(m+1)$-tuple $\underline{n}=(n_0,\ldots, n_m)$. 
\begin{prop}\thlabel{c} For polynomials  $p_1,p_2 \in k[u]$ of degree $\leqslant n_0-2$, the $k$-algebras $R_m(\underline{n},{p_1})$ and  $R_m(\underline{n},{p_2})$ are isomorphic if and only if there exist $\mu, \lambda \in k^*$ such that $p_2(u)= \mu p_1(\lambda u)$. 
\end{prop}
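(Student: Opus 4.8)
The plan is to treat the two implications separately, handling the reverse (``if'') implication by an explicit change of coordinates and the direct (``only if'') implication by exploiting the invariants computed in \thref{b}.

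For the ``if'' implication, suppose $p_2(u)=\mu p_1(\lambda u)$ with $\mu,\lambda\in k^*$. I would look for an isomorphism induced by a diagonal automorphism $\Phi$ of the ambient polynomial ring $k[x_0,\ldots,x_m,y,z,t]$ of the form $x_i\mapsto c_ix_i$, $y\mapsto c_yy$, $z\mapsto c_zz$, $t\mapsto c_tt$, requiring that $\Phi$ carry the defining polynomial of $R_m(\underline n,p_1)$ to a scalar multiple $\beta$ of that of $R_m(\underline n,p_2)$. Comparing monomials term by term reduces this to the system $c_z^q=c_t^r=c_y\prod_ic_i^{n_i}=\beta$, $c_0=\mu\beta$ and $\prod_ic_i=\lambda$, the last two coming from the identity $c_0x_0p_1(\lambda\underline x)=\beta x_0p_2(\underline x)$ and the fact that $p$ depends only on $\underline x=\prod_ix_i$. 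When $m\geq 1$ one can simply take $\beta=c_z=c_t=1$, $c_0=\mu$, $c_1=\lambda/\mu$ and $c_2=\cdots=c_m=1$, so that the free scalings on $x_1,\ldots,x_m$ absorb the constraint with no root extraction, the remaining $c_y$ being then forced. (For $m=0$ the same computation forces $\beta=\lambda/\mu$ to be simultaneously a $q$-th and an $r$-th power, hence an element of $(k^*)^{qr}$ since $\gcd(q,r)=1$, which is why the unconstrained statement is genuinely a phenomenon of the range $m\geq 1$.)

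For the ``only if'' implication, let $\phi\colon R_m(\underline n,p_1)\to R_m(\underline n,p_2)$ be a $k$-isomorphism. By \thref{b} it carries $\mathrm{ML}=k[x_0,\ldots,x_m]$ and $\mathrm{DK}=k[x_0,\ldots,x_m,z,t]$ of the source onto those of the target, so it induces an automorphism $\sigma$ of $B=k[x_0,\ldots,x_m]$ and restricts to an automorphism of the polynomial subring $k[x_0,\ldots,x_m,z,t]$. I would first pin down $\sigma$ geometrically by analysing the morphism $X_m(\underline n,p)\to\mathbb{A}^{m+1}_k=\mathrm{Spec}(B)$ induced by the inclusion of the Makar--Limanov invariant, with which $\phi$ is compatible over $\sigma$. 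A direct computation of its fibers shows that the fiber over a point $\xi$ is isomorphic to $\mathbb{A}^2$ when all coordinates $\xi_i$ are nonzero, to $\{z^q+t^r=c\}\times\mathbb{A}^1$ with $c\neq 0$ (a smooth affine curve times a line) when $\xi_0\neq 0$ but some other coordinate vanishes, and to the singular cylinder $\{z^q+t^r=0\}\times\mathbb{A}^1$ exactly when $\xi_0=0$. As these three fiber types are mutually non-isomorphic, $\sigma$ must preserve the torus $\{\prod_ix_i\neq 0\}$, hence the coordinate hyperplane arrangement, and must preserve the distinguished hyperplane $\{x_0=0\}$ carrying the singular fibers. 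Consequently $\sigma$ is monomial: $\sigma(x_i)=\lambda_ix_{\pi(i)}$ for scalars $\lambda_i\in k^*$ and a permutation $\pi$ with $\pi(0)=0$, whence $\sigma(x_0)=\lambda_0x_0$ and $\sigma(\underline x)=\Lambda\underline x$ with $\Lambda=\prod_i\lambda_i$.

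It then remains to control the images $\phi(z),\phi(t)\in k[x_0,\ldots,x_m,z,t]$ and to read off the relation between $p_1$ and $p_2$. Restricting $\phi$ to the singular fiber over $\{x_0=0\}$, where the defining equation degenerates to the cusp $z^q+t^r=0$, and using that the induced self-map of this integral cuspidal curve is an isomorphism together with the fact that its only automorphisms are the weighted scalings $z\mapsto\nu z$, $t\mapsto\tau t$ with $\nu^q=\tau^r$, I would obtain $\phi(z)\equiv\nu z$ and $\phi(t)\equiv\tau t$ modulo $x_0$. Substituting $\sigma(x_i)=\lambda_ix_{\pi(i)}$, $\phi(z)=\nu z$, $\phi(t)=\tau t$ into the defining polynomial and requiring that $\phi$ respect the full relation (which forces $\pi$ to preserve $\underline n$, so that $\phi(\underline x^{\underline n})=(\prod_i\lambda_i^{n_i})\underline x^{\underline n}$ and $\phi(y)$ lands in the target ring) then yields $z^q+t^r\mapsto\beta(z^q+t^r)$ with $\beta=\nu^q=\tau^r$, and matching the $x_0p(\underline x)$-term gives $p_2(u)=\mu p_1(\lambda u)$ with $\lambda=\Lambda$ and $\mu=\lambda_0/\beta$. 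The main obstacle is this last step: a priori $\phi(z)$ and $\phi(t)$ may carry higher-order corrections in $x_0$ (for instance $\phi(z)=\nu z+x_0g$), whose effect on $z^q+t^r$ could conceivably be reabsorbed through the term $\phi(\underline x^{\underline n})\phi(y)$, so the cusp computation must be upgraded from a congruence modulo $x_0$ to an exact identity. I expect this to require a filtration or weighted-degree argument on the localization $R_m(\underline n,p)[\underline x^{-1}]=k[x_0^{\pm 1},\ldots,x_m^{\pm 1},z,t]$, tracking the most negative $\underline x$-adic order in the identity expressing $\phi(y)$, in order to force the corrections to vanish.
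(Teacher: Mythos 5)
Your reverse implication is essentially the paper's own argument: the diagonal map you exhibit for $m\geq 1$ (namely $x_0\mapsto \mu x_0$, $x_1\mapsto \mu^{-1}\lambda x_1$, $y\mapsto \mu^{n_1-n_0}\lambda^{-n_1}y$) is exactly the automorphism $\eta$ the paper writes down. Your reduction of the forward implication to the monomiality of the induced automorphism of $\mathrm{ML}=k[x_0,\ldots,x_m]$ via the fiber types of $X_m(\underline{n},p)\to\mathbb{A}^{m+1}_k$ is also a reasonable self-contained substitute for the paper's appeal to \cite[Theorem 4.1]{GG} and to the non-normality of $R_i/(x_0)$. The genuine gap is the end of the forward implication, and you name it yourself: after obtaining $\phi(z)\equiv\nu z$ and $\phi(t)\equiv\tau t$ modulo $x_0$, you must exclude higher-order corrections whose effect could be reabsorbed through $\phi(\underline{x}^{\underline{n}})\phi(y)$, and you only \emph{propose} a filtration argument without carrying it out. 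This is not a routine verification; it is the entire analytic content of isomorphism-type theorems for these rings, and it is also precisely where the hypothesis $\deg p_i\leq n_0-2$ must be used -- your sketch never invokes it, which is a clear sign the decisive step is missing. The paper does not attack this head-on: after composing $\phi$ with a diagonal automorphism $\alpha$ so that it becomes a $k[x_1,\ldots,x_m]$-algebra isomorphism, it specializes at $x_1=\cdots=x_m=1$ to get an isomorphism $R_0(n_0,\nu_0 p_1)\cong R_0(n_0,p_2)$ and then quotes \cite[Theorem 1.3]{DMP}, where the degree bound enters and where the analogue of your missing step has already been done in the case $m=0$. Either import that theorem as the paper does, or actually supply the weighted-degree argument; as written the proof is incomplete.

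Separately, your parenthetical assertion that the unconstrained statement ``is genuinely a phenomenon of the range $m\geq 1$'' is wrong: the case $m=0$ is exactly \cite[Theorem 1.3]{DMP}, which is an if-and-only-if statement. What fails for $m=0$ is only the \emph{diagonal} realization of the isomorphism (your computation correctly shows it would force $\lambda/\mu\in(k^*)^{qr}$); the isomorphisms realizing arbitrary $\mu,\lambda$ in that case are non-diagonal, arising from automorphisms of $k[x_0,z,t]$ carrying the ideal $(x_0^{n_0},z^q+t^r+x_0p_1(x_0))$ to $(x_0^{n_0},z^q+t^r+x_0p_2(x_0))$ as in the affine blow-up description. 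Your construction does settle the reverse implication for $m\geq 1$, which is the range the paper's explicit $\eta$ covers, but the claim that the proposition fails for $m=0$ should be removed.
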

\begin{proof}
Put $D=k[x_{0},\ldots, x_{m}, z,t]$. The inclusion $\sigma_{m,\underline{n},p}:\mathrm{DK}(R_m(\underline{n},p))=D\to R_m(\underline{n},p)$ expresses  $R_m(\underline{n},p)$ as the affine blow-up algebra 
$$D[\frac {J_m(\underline{n},p)}{\underline{x}^{\underline{n}}}]\subset D[\underline{x}^{-1}]$$ of $D$, where $J_m(\underline{n},p)$ is the ideal generated by $\underline{x}^{\underline{n}}$ and  $z^{q}+t^{r}+x_{0}p(\underline{x})$.
Put  $R_i=R_m(\underline{n},{p_i})$ and $\sigma_i=\sigma_{m,\underline{n},p_i}$.
An isomorphism of $k$-algebras $\phi:R_1\to R_2$ restricting to isomorphisms between the respective Derksen and Makar-Limanov invariants of $R_1$ and $R_2$, it follows that we have a commutative diagram 
$$
\begin{tikzcd}
k[x_{0},\ldots, x_{m}] \arrow[r] \arrow[d, "\phi_{\mathrm{ML}}"]  & k[x_{0},\ldots, x_{m}, z,t]  \arrow[r,"\sigma_1"] \arrow[d, "\phi_{\mathrm{DK}}"] & R_1\arrow[d, "\phi"]\\
k[x_{0},\ldots, x_{m}]\arrow[r] & k[x_{0},\ldots, x_{m}, z,t]  \arrow[r,"\sigma_2"] & R_2
\end{tikzcd}
$$
in which the vertical maps are $k$-algebra isomorphisms. Now by \cite[Theorem 4.1]{GG}, it follows that for every $i\in \{0,\ldots, m\}$, $\phi(x_i)=\lambda_{i}x_{j(i)}$ for some $j(i)\in \{0,\ldots, m\}$ such that $n_{j(i)}=n_i$ and some $\lambda_{i}\in k^*$.  
Noting that $R_i/(x_{j})$ is non-normal if $j=0$ and isomorphic to a polynomial over $k$ otherwise, we conclude that $\phi(x_{0})=\lambda_0x_{0}$. Put $\nu_0=(\prod_{i=1}^m\lambda_i)^{-1}$ and let $\alpha$ be the $k$-algebra automorphism of $D$ defined by $$\alpha(x_0,x_1,\ldots, x_i, \ldots, x_m,z,t)=(\nu_0x_0, \lambda_1 x_{j(1)}, \ldots, \lambda_i x_{j(i)},\ldots \lambda_m x_{j(m)},z,t).$$
Since $\alpha(\underline{x})=\underline{x}$, and $\alpha(z^q+t^r+x_0p(\underline{x}))=z^q+t^r+\nu_0x_0p(\underline{x})$, the $k$-algebra $\tilde{R}_1:=R_1\otimes_{\sigma_1(D)} \alpha(D)$ is isomorphic over $D$ to $R_m(\underline{n},\tilde{p}_1)$, where  $\tilde{p}_1=\nu_0 p_1$, and $\phi$ factors through a homomorphism of $k[x_1,\ldots,x_m]$-algebras $\tilde{\phi}:\tilde{R}_1\to R_2$. Replacing $R_1$ by $\tilde{R}_1$ and $\phi$ by $\tilde{\phi}$, the corresponding vertical maps in the diagram above are now isomorphisms of $k[x_1,\ldots, x_m]$-algebras. 
 Letting $\mathfrak{m}\subset k[x_1,\ldots,x_m]$ be the maximal ideal generated by the elements $x_i-1$, $i=1,\ldots$ and $\kappa(\mathfrak{m})=k[x_1,\ldots,x_m]/\mathfrak{m}\cong k$, $\tilde{\phi}$ induces an  isomorphism of $k$-algebras $\bar{\phi}$ between the $k$-algebras $$\tilde{R}_1\otimes_{k[x_1,\ldots,x_m]} \kappa(\mathfrak{m})\cong R_0(n_0,\nu_0p_1) 
 \;\textrm{and} \; 
R_2\otimes_{k[x_1,\ldots,x_m]}\kappa(\mathfrak{m}) \cong R_0(n_0,p_2)).$$
Since by assumption, $\deg p_1,\deg p_2\leq n_0-2$, it follows from \cite[Theorem 1.3]{DMP} that there exists $\lambda,\mu'\in k^*$ such that $p_2(u)=\mu' (\nu_0p_1)(\lambda u)$, whence that $p_2(u)=\mu p_1(\lambda u)$ for some $\lambda,\mu\in k^*$. 

Conversely, if $p_2(u)=\mu p_1(\lambda u)$ then the $k$-automorphism $\eta$ of $k[x_0,\ldots, x_m, y,z,t]$ defined by $$\eta(x_0,x_1,x_2\ldots,x_m,y,z,t)=(\mu x_0,\mu^{-1}\lambda x_1,x_2,\ldots,x_m,\mu^{n_1-n_0}\lambda^{-n_1}y,z,t)$$ induces an isomorphism $\phi:R_m(\underline{n},p_1)\to R_m(\underline{n},p_2)$.
\end{proof}

\subsection{Isomorphism types of $\mathbb{A}^1$-cylinders}

The following proposition shows that the $\mathbb{A}^1$-cylinders $X_m(\underline{n},p)\times_k \mathbb{A}^1_k$ over the $k$-varieties $X_m(\underline{n},p)$ are all isomorphic. Together with Proposition \ref{c}, it implies in particular that for every $m\geq 0$, the set of varieties $\{X_m(\underline{n},p),\, p \in k[u]\setminus uk[u]\}$ contains
an infinite collection of pairwise non-isomorphic  $k$-varieties $X_m(\underline{n},p_1)$ and $X_m(\underline{n},p_2)$ providing counterexamples to the generalized Cancellation Problem in dimension $m+3$.

\begin{prop}\thlabel{d} The $k$-algebras $R_m(\underline{n},p)^{[1]}$, $p\in k[u]\setminus uk[u]$, are all isomorphic. 
\end{prop}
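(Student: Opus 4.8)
The plan is to prove the statement by first reducing to a comparison between an arbitrary $p$ and a constant, and then to establish the cylinder isomorphisms by a Danielewski-type torsor argument. For the reduction, I would use the converse direction of \thref{c}: the rescaling $p(u)\mapsto \mu\, p(\lambda u)$ already induces an isomorphism $R_m(\underline{n},p)\cong R_m(\underline{n},\mu\,p(\lambda\,\cdot\,))$, so after normalizing we may assume $p(0)=1$, and it suffices to exhibit, for every such $p$, an isomorphism $R_m(\underline{n},p)^{[1]}\cong R_m(\underline{n},1)^{[1]}$ onto the cylinder over the constant member of the family. Throughout I would keep in view the affine-modification description recorded in the proof of \thref{c}: $X_m(\underline{n},p)$ is the modification of $\mathbb{A}^{m+3}=\mathrm{Spec}(D)$, $D=k[x_0,\dots,x_m,z,t]$, along the divisor $\{\underline{x}^{\underline{n}}=0\}$ with centre $J_m(\underline{n},p)=(\underline{x}^{\underline{n}},\,z^q+t^r+x_0p(\underline{x}))$. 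The crucial observation is that the restriction of this centre to each component $\{x_i=0\}$ of the divisor depends on $p$ only through the value $p(0)$, since $\underline{x}\equiv 0$ modulo $x_i$; after the above normalization this boundary data is therefore the same for $p$ and for the constant $1$.

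The geometric engine is the fibre-product (Danielewski) trick: if two affine $k$-varieties $X_1,X_2$ are $\mathbb{G}_{a}$-torsors over one and the same base $V$, then $X_1\times_V X_2$ is a $\mathbb{G}_{a}$-torsor over each $X_i$, necessarily trivial because the $X_i$ are affine, so that $X_1\times_k\mathbb{A}^1_k\cong X_1\times_V X_2\cong X_2\times_k\mathbb{A}^1_k$; and when the common base $V$ is itself affine the torsors admit slices and one obtains directly $X_i\times_k\mathbb{A}^1_k\cong V\times_k\mathbb{A}^1_k$. To feed this machine I would realize the cylinder $X_m(\underline{n},p)\times_k\mathbb{A}^1_k$ as a $\mathbb{G}_{a}$-torsor over a base $V$ that does not depend on $p$. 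Concretely this amounts to producing on $R_m(\underline{n},p)^{[1]}$ a fixed-point-free locally nilpotent $k$-derivation, assembled from the two triangular locally nilpotent derivations $\underline{x}^{\underline{n}}\partial_z-qz^{q-1}\partial_y$ and $\underline{x}^{\underline{n}}\partial_t-rt^{r-1}\partial_y$ of $R_m(\underline{n},p)$ together with the extra cylinder variable, and in identifying the associated ring of invariants, the coordinate ring of $V$, independently of $p$. Passing to the cylinder is essential: by \thref{b} one has $\mathrm{ML}(R_m(\underline{n},p))=k[x_0,\dots,x_m]\neq k$, which forces the $\mathbb{G}_{a}$-actions on $X_m(\underline{n},p)$ to share a common fixed point, so no fixed-point-free action exists before the extra variable is adjoined.

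The hard part will be exactly this construction of the free action on the cylinder and the verification that its quotient $V$ is $p$-independent. One must organize the modification over the reducible, non-reduced divisor $\{\underline{x}^{\underline{n}}=0\}=\bigcup_i\{x_i=0\}$ so that the local gluing data coincide for $p$ and for $1$; this is precisely where the hypothesis $p\in k[u]\setminus uk[u]$, i.e. $p(0)\neq 0$, is used, guaranteeing that the boundary of the modification meets each special fibre $\{x_i=0\}$ in a way dictated only by $p(0)$ and hence insensitive, after normalization, to the higher coefficients of $p$. Once $V$ is seen to be the same scheme for $p$ and for the constant $1$, the torsor argument of the previous paragraph delivers $R_m(\underline{n},p)^{[1]}\cong R_m(\underline{n},1)^{[1]}$, and combining this with the first reduction step shows that all the algebras $R_m(\underline{n},p)^{[1]}$, $p\in k[u]\setminus uk[u]$, are mutually isomorphic.
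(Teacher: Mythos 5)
Your overall strategy (reduce to $p(0)=1$, then compare each $R_m(\underline{n},p)^{[1]}$ with $R_m(\underline{n},1)^{[1]}$ via the affine modification picture) starts out parallel to the paper, but the engine you propose --- realizing both cylinders as $\mathbb{G}_a$-torsors over a common, $p$-independent base $V$ --- is left entirely unconstructed, and the heuristic you offer to justify its existence is not valid. You argue that the centre $J_m(\underline{n},p)=(\underline{x}^{\underline{n}},z^q+t^r+x_0p(\underline{x}))$ restricts to each component $\{x_i=0\}$ of the divisor in a way depending only on $p(0)$, and that after normalization "this boundary data is therefore the same for $p$ and for the constant $1$". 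But the isomorphism type of an affine modification is governed by the centre as an ideal, not by its restriction to the divisor: your observation applies verbatim to the varieties $X_m(\underline{n},p)$ themselves (no cylinder needed), and Proposition \ref{c} shows these are pairwise non-isomorphic for generic $p$ with $p(0)=1$. So the "crucial observation" proves too much and cannot be the reason the cylinders coincide; the actual $p$-dependence that must be cancelled lives in the higher-order coefficients of $p$, exactly the ones your boundary argument discards.

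What is genuinely needed --- and what the paper supplies --- is an arithmetic input that your proposal never touches: writing $p(\underline{x})\equiv\exp(\underline{x}f(\underline{x}))$ modulo $\underline{x}^{\underline{n}}$ (possible since $\operatorname{char}k=0$ and $p(0)=1$) and extracting a $q$-th root $g_1\equiv\exp(\tfrac1q\underline{x}f)$ and an $r$-th root $g_2\equiv\exp(\tfrac1r\underline{x}f)$ of $p$ modulo $\underline{x}^{\underline{n}}$. These are used to build an explicit matrix in $\mathrm{GL}_3(k[x_0,\ldots,x_m])$ acting on $(z,t,w)$ --- this is where the extra cylinder variable $w$ enters, providing the third column $\underline{x}^{\underline{n}}w$ --- whose induced automorphism $\psi$ of $D[w]$ sends $z^q+t^r+x_0p(\underline{x})$ to $p(\underline{x})(z^q+t^r+x_0)$ modulo $\underline{x}^{\underline{n}}$, hence carries $J_m(\underline{n},p)$ onto $J_m(\underline{n},1)$; the universal property of affine blow-up algebras then gives the isomorphism of the cylinders. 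Your torsor/fibre-product scheme is not implausible in spirit (and your remark that $\mathrm{ML}(R_m(\underline{n},p))\neq k$ forces passage to the cylinder is correct), but as written the existence of a fixed-point-free locally nilpotent derivation on $R_m(\underline{n},p)^{[1]}$ with a $p$-independent quotient is precisely the whole content of the proposition, asserted rather than proved, and the one argument you give in its favour is contradicted by Proposition \ref{c}.
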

\begin{proof}
Since every $k$-algebras $R_m(\underline{n},p)$ with $p(0)\neq 0$ is isomorphic to one $R_m(\underline{n},\tilde{p})$ with $\tilde{p}(0)=1$, we can henceforth assume without loss of generality that $p(0)=1$. With the notation of the proof of Proposition \ref{c}, the $k$-algebra $R_m(\underline{n},p)[w]$, where $w$ is a new variable, identifies with the affine blow-up algebra $$D[w][\frac{J_m(\underline{n},p)}{\underline{x}^{\underline{n}}}] \subset D[\underline{x}^{-1}][w].$$
By the universal property of affine blow-up algebras 
\cite[Proposition 2.1]{KZ}, $R_m(\underline{n},p)[w]$ and $R_m(\underline{n},1)[w]$ are isomorphic provided that there exists a $k$-automorphism $\psi$ of $D[w]$ mapping the ideal generated by $\underline{x}^{\underline{n}}$ onto itself and such that $\psi(J_m(\underline{n},p))=J_m(\underline{n},1)$. Given a polynomial $f\in k^{[1]}$ such that $\exp(\underline{x}f(\underline{x}))\equiv p(\underline{x})$ modulo $\underline{x}^{\underline{n}}$, similar arguments as in the proof of \cite[Theorem 1.3]{DMP} and \cite[Theorem 4.1]{DF} imply the existence of a pair of polynomials $g_1,g_2\in k^{[1]}$ such that 
$$ g_1(\underline{x}) \equiv \exp\left(\frac{1}{q}\underline{x} f(\underline{x})\right)  \; \textrm{ and }  g_2(\underline{x}) \equiv\exp\left(\frac{1}{r}\underline{x} f(\underline{x})\right)$$ modulo $\underline{x}^{\underline{n}}$ and such that  
 $\underline{x}^{\underline{n}}g_1(\underline{x}), \underline{x}^{\underline{n}}g_2(\underline{x})$ and $ g_1(\underline{x})g_2(\underline{x})$ generate the unit ideal in $k[x_0,\ldots,x_m]$. It follows that there exist polynomials $h_1,h_2,h_3 \in k[x_0,\ldots,x_m]$ such that the matrix 
$$
\begin{pmatrix}
g_1(\underline{x}) &0 &\underline{x}^{\underline{n}}\\
0 &g_2(\underline{x}) &\underline{x}^{\underline{n}}\\
h_1 &h_2 &h_3
\end{pmatrix} \in \mathrm{Mat}_{3,3}(k[x_0,\ldots,x_m])
$$
    belongs to $\mathrm{GL}_3(k[x_0,\ldots,x_m])$.  
    Letting $\psi$ be the   $k[x_0,\ldots, x_m]$-algebra automorphism of the polynomial ring $D[w]=k[x_0,\ldots, x_m][z,t,w]$ defined by the above matrix, we have
    $$   \psi(z^q+t^r+x_0p(\underline{x}))=(g_1(\underline{x})z+\underline{x}^{\underline{n}}w)^q+(g_2(\underline{x})t+\underline{x}^{\underline{n}}w)^r+x_0p(\underline{x})= p(\underline{x})(z^q+t^r+x_0) +\underline{x}^{\underline{n}}F,
    $$
    for some polynomial $F\in D[w]$. Hence, $\psi$ maps the ideal $J_m(\underline{n},p)=(\underline{x}^{\underline{n}},z^q+t^r+x_0p(\underline{x}))$ onto the ideal $(\underline{x}^{\underline{n}},p(\underline{x})(z^q+t^r+x_0))$. Since $p(\underline{x})$ is invertible modulo $\underline{x}^{\underline{n}}$, the latter ideal is equal to $J_m(\underline{n},1)$ and the assertion follows. 
\end{proof}

We also record the following result, which  implies in particular that the only algebraic invariants known so far which succeeded to distinguish the varieties $X_m(\underline{n},p)$ from affine spaces do not allow to decide whether the varieties $X_m(\underline{n},p)\times _k \mathbb{A}_k^1$ are affine spaces or not. 

\begin{prop}\thlabel{f} For every triple $(m,\underline{n},p)$, the Makar-Limanov and Derksen invariants of the $k$-algebra $R_m(\underline{n},p)^{[1]}$ are respectively equal to the subalgebras
$$\mathrm{ML}(R_m(\underline{n},p)^{[1]})=k \quad \textrm{and} \quad  \mathrm{DK}(R_m(\underline{n},p)^{[1]})=R_m(\underline{n},p)^{[1]}.$$ 
\end{prop}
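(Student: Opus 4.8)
The plan is to treat the two invariants separately, the Derksen invariant being immediate and the Makar-Limanov invariant requiring genuinely new locally nilpotent derivations that use the cylinder variable $w$ in an essential way. Throughout write $R=R_m(\underline{n},p)$ and $R^{[1]}=R[w]$.

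For the Derksen invariant, the partial derivative $\partial_w$ is a nonzero locally nilpotent $k$-derivation of $R[w]$ with kernel exactly $R$, so that $R\subseteq\mathrm{DK}(R[w])$. On the other hand, any nonzero locally nilpotent derivation $\delta$ of $R$ (for instance one of the two triangular derivations with $\delta(t)=\underline{x}^{\underline{n}}$, $\delta(y)=-rt^{r-1}$ underlying Proposition \ref{b}) extends to $R[w]$ by $\delta(w)=0$, and $w$ lies in the kernel of this extension; hence $w\in\mathrm{DK}(R[w])$. Since $R$ and $w$ together generate $R[w]$, this yields $\mathrm{DK}(R[w])=R[w]$.

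For the Makar-Limanov invariant I would first record the a priori bound $\mathrm{ML}(R[w])\subseteq k[x_0,\ldots,x_m]$. Indeed, the vertical extensions $\bar\delta$ (with $\bar\delta(w)=0$) of the locally nilpotent derivations $\delta$ of $R$ have kernels $(\ker\delta)[w]$, whose common intersection is $\mathrm{ML}(R)[w]=k[x_0,\ldots,x_m][w]$ by Proposition \ref{b}; intersecting further with $\ker\partial_w=R$ removes $w$ and leaves $k[x_0,\ldots,x_m]$. It then suffices to produce locally nilpotent derivations $D_0,\ldots,D_m$ of $R[w]$ with $D_i(x_j)=0$ for $j\neq i$ but $D_i(x_i)\neq 0$: in the domain $R[w]$ one has $\ker D_i\cap k[x_0,\ldots,x_m]=k[x_0,\ldots,\widehat{x_i},\ldots,x_m]$, so the kernels of the $D_i$ meet $k[x_0,\ldots,x_m]$ exactly in $k$, forcing $\mathrm{ML}(R[w])=k$.

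The construction of the $D_i$ is the crux, and is where the cylinder variable is indispensable. By Proposition \ref{d} we may assume $p=1$, so the defining relation reads $\underline{x}^{\underline{n}}y+z^q+t^r+x_0=0$. A derivation of the ambient polynomial ring that moves a prescribed $x_i$, preserves the relation, and fixes the other $x_j$ does exist; however, as a direct computation with the relation shows (and as it must by Proposition \ref{b}), no such derivation supported on $x_i,y,z,t$ alone is locally nilpotent, the cascade of iterated derivatives failing to terminate because of the linear term $x_0$. The role of $w$ is to supply an additional local slice absorbing this obstruction. Concretely, I would build $D_i$ by the same mechanism that produces the automorphism $\psi$ in the proof of Proposition \ref{d}: solving modulo $\underline{x}^{\underline{n}}$ the exponential congruences yielding the auxiliary polynomials $g_1,g_2$ together with their unimodular completion, and using that data to prescribe $D_i(z)$, $D_i(t)$, $D_i(w)$. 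This mirrors the threefold arguments of \cite{DMP} and \cite{DF}. The main obstacle is precisely the verification that each $D_i$ so obtained is locally nilpotent with $D_i(x_i)\neq 0$; I expect local nilpotence to follow, as in \cite{FreuBook}, by exhibiting a suitable filtration on $R[w]$ with respect to which each $D_i$ is nilpotent, the presence of $w$ being exactly what makes such a filtration available. Granting this, combining the $D_i$ with the vertical derivations and $\partial_w$ collapses the intersection of all kernels to $k$, completing the computation.
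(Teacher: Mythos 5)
Your treatment of the Derksen invariant and your reduction of the Makar--Limanov computation to producing, for each $i$, a locally nilpotent derivation of $R[w]$ killing the $x_j$, $j\neq i$, but not $x_i$, both agree with the paper. The gap is in the one step that carries the actual content: the construction of the derivations $D_i$. The mechanism you propose to borrow from Proposition \ref{d} is not the right tool. That construction produces an \emph{automorphism} of $D[w]=k[x_0,\ldots,x_m][z,t,w]$ over $k[x_0,\ldots,x_m]$ whose sole purpose is to transport the ideal $J_m(\underline{n},p)$ onto $J_m(\underline{n},1)$, i.e.\ to change the polynomial $p$; it fixes every $x_i$ by design, and once you have reduced to $p=1$ (as you do) the exponential congruences become vacuous ($f=0$, $g_1=g_2=1$), so there is no data left from which to ``prescribe $D_i(z)$, $D_i(t)$, $D_i(w)$''. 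You yourself flag that the local nilpotence of the resulting $D_i$ and the nonvanishing of $D_i(x_i)$ remain to be checked, but this is precisely the hard point, and no filtration is actually exhibited that would settle it. Note that already for $m=0$ the statement $\mathrm{ML}(R_0(n_0,1)[w])=k$ is a nontrivial theorem (the reference \cite{D} in the paper), not something that drops out of the blow-up formalism of Proposition \ref{d}.

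The paper circumvents any direct construction by localization. For $i=0$ it base-changes to $K=k(x_1,\ldots,x_m)$, identifies $R\otimes_{k[x_1,\ldots,x_m]}K$ with $R_0(n_0,1)\otimes_k K$, invokes $\mathrm{ML}(R_0(n_0,1)[w])=k$ from \cite{D} to obtain a locally nilpotent $K$-derivation of the localized cylinder not killing $x_0$, and then multiplies by a suitable $f\in k[x_1,\ldots,x_m]$ to clear denominators and descend to $R[w]$. For $i\geq 1$ it localizes instead at $L_i=k(x_0,\ldots,\widehat{x_i},\ldots,x_m,t)$, recognizes the result as the cylinder over a smooth Danielewski surface $x_i^{n_i}Y+z^q+t^r+x_0=0$ in $\mathbb{A}^3_{L_i}$, whose cylinder has trivial Makar--Limanov invariant by the arguments of \cite{D}, and again clears denominators. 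To complete your proof you would either need to import these external inputs in the same way or actually carry out the explicit construction of the $D_i$, which is substantially harder than your sketch suggests.
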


\begin{proof}
In view of Proposition \ref{d}, we can assume without loss of generality that $p=1$. Put  $R=R_m(\underline{n},1)$ and $R[w]=R_m(\underline{n},p)^{[1]}$. The equality $\mathrm{DK}(R[w])=R[w]$ follows from the fact  that every nonzero locally nilpotent $k$-derivation of $R$ extends to a locally nilpotent $k[w]$-derivation of $R[w]$ and that $R\subset \mathrm{Ker}(\tfrac{\partial}{\partial w})$. 
On the other hand, it follows from Proposition \ref{b}, that $\mathrm{ML}(R[w]) \subset \mathrm{ML}(R)=  k[x_{0},\ldots,x_m]$. To complete the proof,  we now argue that for every $i\in\{0,1,\ldots,m\}$ there exists
a locally nilpotent $k$-derivation $\tilde{\partial}_i$ of $R[w]$ which
does not contain $x_{i}$ in its kernel. 
Letting $K=k(x_1,\ldots,x_m)$, we have $$R\otimes_{k[x_1,\ldots,x_m]} K\cong K[x_0,Y,z,t]/(x_0^{n_0}Y+z^q+t^r+x_0) \cong R_0(n_0,1)\otimes_k K,$$ where $Y=(\prod_{i=1}^mx_i^{n_i}) y$. Since $\mathrm{ML}(R_0(n_0,1)[w])=k$ by \cite{D}, there
exists a locally nilpotent $K$-derivation $\partial$
of $R[w]\otimes_{k[x_0,\ldots, x_m]} K$ which does not have $x_{0}$ in its kernel. Since $R[w]\otimes_{k[x_0,\ldots, x_m]} K$ is a finitely generated $K$-algebra,
it follows in turn that for suitably chosen $f\in k[x_1,\ldots,x_m]$, $ f\partial$ restricts to a nonzero locally nilpotent $k[x_1,\ldots,x_m]$-derivation $\tilde{\partial}_0$
of $R[w]\subset R[w]\otimes_{k[x_0,\ldots, x_m]} K$ such that $x_0 \notin \mathrm{Ker} \, \tilde{\partial}$. Next, for $i\in \{1,\ldots,m\}$, let $L_i=k(x_0,x_{1},\ldots,x_{i-1},x_{i+1}\ldots,x_{m},t)$. Then $$ R\otimes_{k[x_0,x_1,\ldots,x_{i-1},x_{i+1}\ldots,x_{m},t]} L_i \cong L_i[x_i,Y,z]/(x_{i}^{n_{i}}Y+z^{q}+t^{r}+x_{0}),$$
where $Y=x_{0}^{n_{0}}\prod_{j\neq i}x_{j}^{n_{j}}y$. The latter $L_i$-algebra is the coordinate ring $B$ of a smooth Danielewski surface in $\mathbb{A}^3_{L_i}$ for which it can be verified by similar arguments as in \cite{D} that $\mathrm{ML}(B[w])=L_i$. The same reasoning as in the previous case then gives the existence of a locally nilpotent $k[x_0,x_1,\ldots,x_{i-1},x_{i+1}\ldots,x_{m},t]$-derivation $\tilde{\partial}_i$ of $R[w]$ such that $x_i \notin \mathrm{Ker} \, \tilde{\partial}_i$.
\end{proof}

\section{$\mathbb{A}^1$-contractibility}

Recall that by Proposition \ref{b}, the $k$-varieties $X_m(\underline{n},p)=\mathrm{Spec}(R_m(\underline{n},p))$ are all non-isomorphic to $\mathbb{A}^{m+3}_k$. The next proposition implies in turn that all these are "exotic" $\mathbb{A}^1$-contractible affine $k$-varieties. 

\begin{prop}\thlabel{a} Every affine $k$-variety $X_m(\underline{n},p)$ as in Notation \ref{mainnota} is $\mathbb{A}^{1}$-contractible. 
\end{prop}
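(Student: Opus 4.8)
The plan is to reduce to the case $p=1$ and then run an induction on $m$ using the projection to the $x_m$-axis, the inductive step being governed by a single homotopy pushout square coming from homotopy purity.

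First I would reduce to $p=1$. By Proposition~\ref{d} the $k$-algebras $R_m(\underline{n},p)^{[1]}$ are all isomorphic for $p\in k[u]\setminus uk[u]$, so the $\mathbb{A}^1$-cylinders $X_m(\underline{n},p)\times_k\mathbb{A}^1_k$ are all isomorphic as $k$-varieties. Since the projection $Y\times_k\mathbb{A}^1_k\to Y$ is an $\mathbb{A}^1$-weak equivalence for every $Y$, it follows that $X_m(\underline{n},p)\simeq X_m(\underline{n},p)\times_k\mathbb{A}^1_k\cong X_m(\underline{n},1)\times_k\mathbb{A}^1_k\simeq X_m(\underline{n},1)$ in $\mathbf{H}(k)$. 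Thus it suffices to prove that $X_m(\underline{n},1)$ is $\mathbb{A}^1$-contractible, which I do by induction on $m$. For $m=0$, the variety $X_0(n_0,1)$ is a Koras--Russell type threefold $\{x_0^{n_0}y+z^q+t^r+x_0=0\}$, whose $\mathbb{A}^1$-contractibility is established in \cite{DF}; this is the base case.

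For the inductive step, consider the projection $\pi=\mathrm{pr}_{x_m}\colon X_m(\underline{n},1)\to\mathbb{A}^1_k$ and analyse its fibers. Over the point $x_m=0$ the defining equation degenerates to $z^q+t^r+x_0=0$, so $Z:=\pi^{-1}(0)$ is the graph $x_0=-(z^q+t^r)$ and is isomorphic to $\mathbb{A}^{m+2}_k$ with coordinates $x_1,\dots,x_{m-1},y,z,t$. Over a point $x_m=a\neq 0$, the equation reads $a^{n_m}\big(\prod_{i=0}^{m-1}x_i^{n_i}\big)y+z^q+t^r+x_0=0$, so after rescaling $y$ this fiber is isomorphic to $X_{m-1}(\underline{n}',1)$ with $\underline{n}'=(n_0,\dots,n_{m-1})$. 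Because we have normalised $p=1$ this family is constant in $a$, and the rescalings are regular in $a$ and assemble into a trivialisation $U:=\pi^{-1}(\mathbb{G}_{m})\cong\mathbb{G}_{m}\times_k X_{m-1}(\underline{n}',1)$. By the induction hypothesis $X_{m-1}(\underline{n}',1)$ is $\mathbb{A}^1$-contractible, whence $U\simeq\mathbb{G}_{m}$ in $\mathbf{H}(k)$. Finally $Z=\{x_m=0\}$ is a smooth principal divisor in the smooth variety $X_m(\underline{n},1)$, cut out by the global function $x_m$, so its normal bundle $N_{Z/X_m}$ is trivial of rank one.

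It remains to glue the two pieces. Since $N_{Z/X_m}\cong\mathcal{O}_Z$, a tubular neighborhood of $Z$ is $\mathbb{A}^1$-equivalent to $Z\times_k\mathbb{A}^1_k$ with $Z=Z\times\{0\}$ and punctured neighborhood $Z\times_k\mathbb{G}_{m}$, and by the homotopy purity theorem \cite{MV} the square with vertices $Z\times_k\mathbb{G}_{m}$, $U$, $Z\times_k\mathbb{A}^1_k$ and $X_m(\underline{n},1)$ is homotopy cocartesian. Hence $X_m(\underline{n},1)$ is the homotopy pushout of $Z\times_k\mathbb{A}^1_k\leftarrow Z\times_k\mathbb{G}_{m}\to U$. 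Using $Z\simeq\mathrm{pt}$ and $X_{m-1}(\underline{n}',1)\simeq\mathrm{pt}$, this pushout reduces in $\mathbf{H}(k)$ to $\mathrm{pt}\leftarrow\mathbb{G}_{m}\xrightarrow{\ \eta\ }\mathbb{G}_{m}$, which is $\mathbb{A}^1$-contractible as soon as $\eta$ is an $\mathbb{A}^1$-weak equivalence. This last point is the step I expect to be the main obstacle: one must check that the comparison map $\eta$ induced by the inclusion of the punctured tubular neighborhood respects the two $\mathbb{G}_{m}$-factors, i.e. that the normal $\mathbb{G}_{m}$-direction to $Z$ (parametrised by $x_m$) is carried to the base $\mathbb{G}_{m}$-factor $\{x_m\neq 0\}$ of the product $U\cong\mathbb{G}_{m}\times_k X_{m-1}(\underline{n}',1)$. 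Granting this, $\eta$ is the identity on the $\mathbb{G}_{m}$-factor up to a map between $\mathbb{A}^1$-contractible varieties, hence an $\mathbb{A}^1$-weak equivalence; the homotopy pushout then collapses to a point, $X_m(\underline{n},1)$ is $\mathbb{A}^1$-contractible, and the induction is complete.
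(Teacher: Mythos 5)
Your reduction to $p=1$ via Proposition \ref{d}, your base case $m=0$ from \cite{DF}, and your description of the two pieces $Z=\{x_m=0\}\cong\mathbb{A}^{m+2}_k$ and $U=X_m\setminus Z\cong\mathbb{G}_{m,k}\times_k X_{m-1}(\underline{n}',1)$ all coincide with the paper's. The gap is in the gluing step. In $\mathbf{H}(k)$ there is no tubular neighborhood theorem: there is no map $Z\times_k\mathbb{G}_{m,k}\to U$ realising a ``punctured tubular neighborhood'', and the Morel--Voevodsky purity theorem does not present $X_m$ as the homotopy pushout of $Z\times_k\mathbb{A}^1_k\leftarrow Z\times_k\mathbb{G}_{m,k}\to U$; it only identifies the quotient $X_m/(X_m\setminus Z)$ with the Thom space $\mathrm{Th}(N_{Z/X_m})\simeq Z_+\wedge\mathbb{P}^1$. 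So the square on which your entire inductive step rests does not exist. Moreover, the information purity does give is not enough on its own: the genuine cofiber sequence $X_m\setminus Z\to X_m\to Z_+\wedge\mathbb{P}^1$, together with $X_m\setminus Z\simeq\mathbb{G}_{m,k}$ and $Z\simeq\mathrm{pt}$, only says that the homotopy cofiber of some map $\mathbb{G}_{m,k}\to X_m$ is $S^1\wedge\mathbb{G}_{m,k}$; this does not determine $X_m$ (the cofiber of $\mathbb{G}_{m,k}\to\mathrm{pt}$ is the same), so contractibility cannot be read off. The ``main obstacle'' you flag --- whether $\eta$ respects the two $\mathbb{G}_m$-factors --- is not the real one.

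The paper circumvents this by introducing a third subvariety, the hyperplane $H_m=\{y=0\}\cong\mathbb{A}^{m+2}_k$, and strengthening the induction hypothesis to the statement that the closed immersion $j_m:H_m\to X_m$ is an $\mathbb{A}^1$-weak equivalence. One compares the two cofiber sequences associated to the pairs $(H_m,H_m\setminus P_m)$ and $(X_m,X_m\setminus W_m)$, where $W_m=Z$ and $P_m=H_m\cap W_m$: the map on open parts is $j_{m-1}\times\mathrm{id}_{\mathbb{G}_{m,k}}$, an equivalence by the induction hypothesis, and the map on quotients is, by purity applied to both pairs, the $\mathbb{P}^1$-suspension of the inclusion of affine spaces $P_m\hookrightarrow W_m$, hence an equivalence. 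The weak five lemma then shows that $j_m$ is an $\mathbb{A}^1$-weak equivalence, and contractibility of $X_m$ follows from that of $H_m\cong\mathbb{A}^{m+2}_k$. To repair your outline you need some such comparison of two cofiber sequences along a map; the pushout along a punctured neighborhood is not available in this setting.
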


\begin{proof}
Since $X_m(\underline{n},p)\times_k \mathbb{A}^1_k\cong X_m(\underline{n},1)\times_k\mathbb{A}^1_k$ by Proposition \ref{d}, it suffices to show, by $\mathbb{A}^1$-invariance, that $X_m:=X_m(\underline{n},1)$ is $\mathbb{A}^1$-contractible. The result for $m=0$ being  established in \cite{DF}, we henceforth  assume that $m>0$. We put $A_{m-1}=k[x_1,\ldots, x_{m-1},z,t]$, $R_m=A_{m-1}[x_0,x_m,y]/(\prod_{i=0}^m x_i^{n_i}y+z^q+t^r+x_0)$ and we define the following closed subschemes of $X_m$:
\begin{align*}
W_m=\{x_m=0\} & \cong  \mathrm{Spec}(R_m/x_mR_m)  \cong  \mathrm{Spec}(A_{m-1}[y])  \cong \mathbb{A}_k^{m+2} &\\
H_{m}=\{y=0\} & \cong  \mathrm{Spec}(R_m/yR_m) \cong  \mathrm{Spec}(A_{m-1}[x_m]) \cong  \mathbb{A}_k^{m+2}&\\
P_m=W\cap H_m & \cong  \mathrm{Spec}(R_m/(x_m,y)R_m)  \cong  \mathrm{Spec}(A_{m-1})  \cong  \mathbb{A}_k^{m+1}. &
\end{align*}
Denote by $j_m:H_m\to X_m$  and $i_m: H_m\setminus P_m \to  X_m\setminus W_m$ the natural closed immersions and observe that for $Y=x_my$, we have an isomorphism $$X_m\setminus W_m\cong \mathrm{Spec}(A_{m-2}[x_0,x_{m-1},Y][x_m^{-1}]/(x_0^{n_0}\prod_{i\neq m}x_{i}^{n_{i}}Y+z^{q}+t^{r}+x_{0}))\cong X_{m-1}\times_k\mathbb{G}_{m,k}$$ 
for which $i_m$ equals the product $$j_{m-1}\times \mathrm{id}_{\mathbb{G}_{m,k}}: H_m\setminus P_m \cong H_{m-1}\times_k \mathbb{G}_{m,k}= \mathrm{Spec}(A_{m-2}[x_{m-1}][x_m^{\pm 1}]))\to X_{m-1}\times_k\mathbb{G}_{m,k}.$$
Now consider the following diagram of cofiber sequences in  $\mathbf{H(k)}$
$$
\begin{tikzcd}
H_{m}\setminus P_{m} \arrow[r] \arrow[d, "i_{m}"]  & H_{m}  \arrow[r] \arrow[d, "j_{m}"] & H_{m}/(H_{m}\setminus P_{m}) \arrow[d, "l_{m}"]\\
X_{m}\setminus W_m \arrow[r] & X_{m} \arrow[r] & X_{m}/(X_{m}\setminus W_m)
\end{tikzcd}
$$
associated to the open immersions $H_m\setminus P_m \to H_m$ and $X_m\setminus W\to W_m$. Since $H_m$ is $\mathbb{A}^1$-contractible, $X_m$ is $\mathbb{A}^1$-contractible provided that $j_m$ is an $\mathbb{A}^1$-weak equivalence. The case $m=0$ follows from the fact that $j_0: H_0 \to X_0$ is an $\mathbb{A}^1$-weak equivalence (cf. \cite{DF}) and we now establish this property for $m>0$ by induction on $m$. 

Since $H_m$ is $\mathbb{A}^1$-contractible, it is enough, by the weak five lemma \cite[Lemma 2.1]{DF}, to verify that $i_m$ and $l_m$ are both $\mathbb{A}^1$-weak equivalences. The property for $i_m$ follows from the observation made above that $i_m=j_{m-1}\times \mathrm{id}_{\mathbb{G}_{m,k}}$ and the assumption hypothesis that $j_{m-1}$ is an $\mathbb{A}^1$-weak equivalence. On the other hand, since $P_{m}$ is the complete intersection of $W$
and $H_{m}$ in $X_{m}$, the normal bundle $N_{P_{m}/H_{m}}$ of $P_m$ in $H_m$ equals the restriction to $P_m$ of the normal bundle $N_{W_m/X_m}$ of $W_m$ in $X_m$, which is trivial. Since $X_m$, $W_m$, $P_m$ and $H_m$ are smooth schemes, it follows from homotopy purity (\cite[Section 3.2, Theorem 2.23]{MV}) that $H_{m}/(H_{m}\setminus P_{m})\simeq \mathrm{Th}(N_{P_{m}/H_{m}})\simeq P_{m,+}\wedge\mathbb{P}^{1}$
and that $X_{m}/(X_{m}\setminus W_m)\simeq \mathrm{Th}(N_{W_m/X_m}) \simeq W_{m,+}\wedge\mathbb{P}^{1}$. 
Under these isomorphisms the map $l_m$ coincides with map 
\[
\mathrm{Th}(N_{P_{m}/H_{m}})\simeq P_{{m},+}\wedge\mathbb{P}^{1}\to W_{m,+}\wedge\mathbb{P}^{1}\simeq \mathrm{Th}(N_{W_m/X_m})
\]
obtained as the $\mathbb{P}^{1}$-suspension
of the closed immersion $P_{m}\hookrightarrow W_m$. The latter being an $\mathbb{A}^{1}$-weak equivalence, we conclude that $l_m$ is an $\mathbb{A}^{1}$-weak equivalence as well. 
\end{proof}

Proposition \ref{f} and Proposition \ref{a} imply that the $k$-varieties $X_m(\underline{n},p)\times _k \mathbb{A}_k^1 \cong X_m(\underline{n},1)\times _k \mathbb{A}_k^1$
are all $\mathbb{A}^1$-contractible and undistinguishable from an affine space of dimension $m+4$ by means of known algebro-geometric or $\mathbb{A}^1$-homotopic invariants. All of them are thus potential counterexamples to the Zariski Cancellation Problem in dimension $m+3$.

\medskip

We conclude with the construction of an example of an algebraic family of varieties of the form $X_m(\underline{n},p)$ parametrized by an affine space as announced in the introduction:

\begin{ex} \label{exa:part1} Let $k_0$ be a field of characteristic zero, let $m\geq 0$, $\underline{n}=(n,\ldots, n)$ for some integer $n\geq 4$, let  $A=k_0[a_2,\ldots, a_{n-2}]$, $P=1+u+\sum_{j=2 }^{n-2} a_ju^j \in A[u]$ and consider the $A$-algebra $$\mathcal{R}_m(\underline{n},P):=A[x_0,\ldots, x_m,y,t,z]/(\underline{x}^{\underline{n}}y+z^r+t^q+x_0P(\underline{x})).$$
It follows from Proposition \ref{b}  and Proposition \ref{a} that the smooth affine morphism 
$$\pi:\mathcal{X}_m(\underline{n},P):=\mathrm{Spec}(\mathcal{R}_m(\underline{n},P))\to S:=\mathrm{Spec}(A)\cong \mathbb{A}_{k_@}^{n-3}$$ can be viewed as an algebraic family 
whose fibers over points of $S$, closed or not, are all $\mathbb{A}^1$-contractible varieties of dimension $m+3$ non-isomorphic to affine spaces over the corresponding residue fields. The fibers of $\pi$ over the $k_0$-rational points of $S$ are pairwise non-isomorphic by Proposition \ref{c} whereas their $\mathbb{A}^1$-cylinders are all isomorphic by Proposition \ref{d}. 

The construction in the proof of Proposition $\ref{d}$ even provides a nonzero element $r\in A$ such that the $A_r$-algebras 
the $A$-algebras 
$\mathcal{R}_m(\underline{n},P)^{[1]}\otimes_A A_r$ and $(R_m(\underline{n},1)^{[1]})\otimes_k A_r$ are isomorphic. Namely, writing $$\ln (P)=\ln (1+u+\sum_{j=2}^{n-2}a_ju^j)=\sum_{i=1}^{n-1} \gamma_iu^i +u^n H=uF(u)+u^n S\in k_0[[a_2,\ldots, a_{n-2},u]] $$ for some uniquely determined elements $\gamma_i\in A$, $F \in A[u]$ and $S\in k_0[[a_2,\ldots, a_{n-2},u]]$, we have $ \exp(uF(u))=P(u)+u^nT$,  and we can write 
$$\exp(\frac{1}{q} uF(u))=G_1(u)+u^n T_1 \; \textrm{and} \; \exp(\frac{1}{r}uF(u))=G_2(u)+u^nT_2$$ for some $T,T_1,T_2\in k_0[[a_2,\ldots, a_{n-2},u]]$
and polynomials $G_1,G_2\in A[u]$  which can be chosen to be monic and so that their images in $\mathrm{Frac}(A)[u]$ generate the unit ideal. This implies that the resultant $r=\mathrm{Res}(G_1,G_2)\in A $ of $G_1$ and $G_2$ with respect to the variable $u$ is non-zero, whence that the images of $G_1$ and $G_2$ in $A_r[u]$ are relatively prime. It follows in turn that $u^nG_1(u)$,$u^nG_2(u)$ and  $G_1(u)G_2(u)$ generate the unit ideal of $A_r[u]$. Then, as in the proof of Proposition \ref{d}, there exists  $H_1,H_2,H_3\in A_r[x_0,\ldots,x_m]$ such that 
the $A_r[x_0,\ldots, x_m]$-algebra automorphism of $A_r[x_0,\ldots, x_m][z,t,w]$ associated the matrix 
$$
\begin{pmatrix}
G_1(\underline{x}) &0 &\underline{x}^{\underline{n}}\\
0 &G_2(\underline{x}) &\underline{x}^{\underline{n}}\\
H_1 & H_2 &H_3
\end{pmatrix} \in \mathrm{GL}_3(A_r[x_0,\ldots, x_m])
$$
maps the ideal $(\underline{x}^{\underline{n}},z^r+t^q+x_0P(\underline{x})$ isomorphically onto the one $(\underline{x}^{\underline{n}},z^q+t^r+x_0)$ whence lifts to an $A_r[x_0,\ldots,x_m]$-algebra isomorphism between $\mathcal{R}_m(\underline{n},P)\otimes_A A_r[w]$ and $(R_m(\underline{n},1)[w])\otimes_k A_r$. 

Letting $S_r=\mathrm{Spec}(A_r)$ be the corresponding principal Zariski open subset of $S$, the above isomorphism translates geometrically into the property that the restricted family $$\pi|_{S_r}:\mathcal{X}_m(\underline{n},P)|_{S_r}=\mathcal{X}_m(\underline{n},P)\times_S S_r\to S_r$$ becomes isomorphic after taking product over $S_r$ with $\mathbb{A}^1_{S_r}$ to the trivial product family
$$\mathrm{pr}_1 : S_r\times_k (\mathrm{Spec}(R_m(\underline{n},1))\times_k \mathbb{A}^1_k)\to S_r.$$ 
It follows in particular that $\pi|_{S_r}:\mathcal{X}_m(\underline{n},P)|_{S_r}\to S_r$ is an $\mathbb{A}^1$-weak equivalence. 
\end{ex}

\bibliography{nontriv_A1_v3-short}
\bibliographystyle{amsplain}
\end{document}